\documentclass[journal,twoside,web]{./cls/ieeecolor}

\usepackage{./cls/lcsys}
\usepackage{cite}
\usepackage{amsmath,amssymb,amsfonts}
\usepackage{algorithmic}
\usepackage{graphicx}
\usepackage{textcomp}
\usepackage{mathtools}
\usepackage{lipsum}
\usepackage{tikz}

\usepackage{cases}
\usepackage[hidelinks]{hyperref} 
\hypersetup{
    pdftitle=LMI-based approximate dynamic programming for three-phase PMSMs}
    
\newcommand{\be}{\begin{equation}}
\newcommand{\ee}{\end{equation}}

\newcommand{\bald}{\begin{aligned}}
\newcommand{\eald}{\end{aligned}}

\newcommand{\bbm}{\begin{bmatrix}}
\newcommand{\ebm}{\end{bmatrix}}

\newcommand{\bbms}{\begin{bsmallmatrix}}
\newcommand{\ebms}{\end{bsmallmatrix}}

\newcommand{\R}{\mathbb{R}}

\newtheorem{lemma}{Lemma}

\newtheorem{coro}{Corollary}

\newtheorem{prop}{Proposition}

\newcommand{\omre}{\omega_{\text{re}}}

\newcommand{
\ttre}{\theta_{\mathrm{re}}}
\newcommand{\pl}[1]{#1(k+1)}
\newcommand{\ks}[1]{#1(k)}

\definecolor{correctionColor}{HTML}{FF00FF}

\definecolor{redBordeaux}{HTML}{632121}

\usepackage{tikz}
\usepackage{color}
\usepackage{multirow}
\usepackage{amssymb}
\usepackage{gensymb}
\usepackage{tabularx}
\usepackage{extarrows}
\usetikzlibrary{fadings}
\usetikzlibrary{patterns}
\usetikzlibrary{shadows.blur}
\usetikzlibrary{shapes}

\usepackage{pgfplots}
\pgfplotsset{width=10cm,compat=1.9}

\definecolor{midnightGreen}{RGB}{11,85,99}
\definecolor{celestialBlue}{RGB}{82, 153, 211}
\usetikzlibrary{arrows.meta}

\begin{document}
\title{Computationally Efficient Near-Optimal Control for Current Ripple Reduction 
and Optimization\\of Three-Phase Motors via LMIs }
\author{Huu-Thinh Do, Trung B. Tran, Jing Sun, Ilya Kolmanovsky 
\thanks{
H.T Do, T.B. Tran and I. Kolmanovsky
are with the
Department of
Aerospace Engineering; J. Sun is with
the Department of Naval Architecture and Marine Engineering, University of Michigan, Ann Arbor, 48109, MI, USA (e-mail: huuthind@umich.edu) }
\thanks{The authors thank Professor Heath Hofmann and Jordan Perks,
for impeccable guidance on motor operation and modeling
and the University of Michigan's EV Center for funding this work. H-T. Do is grateful to Prof. Uday V. Shanbhag for his instruction in approximate dynamic programming.}
}

\maketitle
\thispagestyle{empty}

\begin{abstract}
The optimal control of  three-phase permanent-magnet synchronous motors (PMSMs) is challenging due to their nonlinearity and the discrete nature of the control set.
Existing approaches either rely on mixed-integer trajectory optimization or require computationally intensive value-iteration procedures. This paper proposes a Linear Matrix Inequality (LMI)-based method for approximating the infinite-horizon value function using a quadratic parameterization and iterated Bellman inequalities, yielding a tractable convex program. The computed function can be obtained efficiently offline and used online as a tail cost in a horizon-one optimal control law. Simulation results show that the proposed approach achieves a favorable trade-off between switching effort and current ripple, with performance comparable to that of finite-control-set MPC but with a significantly lower computational cost. 
\end{abstract}

\begin{IEEEkeywords}

Optimal Control, PMSM, Finite-Control-Set
\end{IEEEkeywords}

\vspace{-1.2em}

\section{Introduction}
\label{sec:Intro}

The optimization of system configuration and its design parameters for many engineering systems has to increasingly account for the achievable performance by the control system.
Characterizing such performance ceiling falls naturally into the domain of optimal control \cite{SussmannWillems}. Computationally efficient methods are therefore desired to determine optimal or near-optimal operating policies that can facilitate rapid assessment of multiple hardware and design parameter variations.

In this paper, we focus on the case of three-phase permanent synchronous motors (PMSMs) driven by voltage-source inverters (VSIs), and consider how control-dependent performance limits in this application can be characterized efficiently by approximating the value function in the underlying dynamic programming formulation \cite{bellman1966dynamic,bertsekas2012dynamic} for the optimal control problem. The resulting method supports rapid assessment of system configuration and design parameters, important for motor deployment in modern electric vehicles.  Notably, and as we also demonstrate, such an approximate value function can be integrated as the terminal cost into a one-step Model Predictive Control (MPC) to achieve online performance in terms of current ripple and switching frequency comparable to that of finite-control-set (FCS) MPC \cite{zafra2023long,karamanakos2019guidelines,li2022finite} but with a lower online computational burden.

The control of PMSMs has been extensively studied from various perspectives \cite{karamanakos2019guidelines}, including geometric control 
\cite{thounthong2018nonlinear} and energy-based method 
\cite{uddin2019port}. A common premise in these approaches is that the input voltage can be continuously varied. In practice, this assumption is justified because the commanded voltage can be approximated through modulation by switching between discrete voltage levels at a sufficiently high rate. Such a cascade structure is typically referred to as \textit{indirect control}. However, in battery-powered electric vehicles, these modulation schemes can be inefficient from an energy-performance standpoint, as the modulation requires the power electronics to switch at a constant rate regardless of necessity. Furthermore, optimal modulation for multilevel inverters remains {insufficiently addressed} due to the combinatorial nature and the redundancy of the possible switching sequences.
These limitations motivate the development of switching strategies that directly determine the inverter switching actions, commonly referred to as \textit{direct control}.

In the literature, many recent studies have addressed optimality characterizations from both the trajectory-optimization and the value-function-approximation viewpoints.
On the one hand, to account for the rotating behavior, continuous-time switching rules have been proposed in \cite{deaecto2023trajectory,egidio2022trajectory}, but {they require} an arbitrarily fast switching {capability}.
As a result, they are not well-suited for assessing the trade-off between switching losses and tracking performance.
In 
\cite{xu2025finite}, {an approach based on mixed-integer programming has been proposed for regulating}
systems with a finite control set toward a periodic trajectory instead of an equilibrium point. While the setting is generic, it requires readily available periodic state-input trajectories, which are difficult to obtain in practice and, to the best of our knowledge, they have not been obtained for the FCS problem of PMSMs.

On the other hand, {different approaches have been proposed based on approximating the value function in the dynamic programming setting.}
For instance, one of the most ``brute-force" techniques is to discretize the state space into a finite grid and consider the problem as finding the value function for the discrete state problem, e.g., via value iteration \cite{zhu2024global}. The framework is conceptually straightforward to implement, yet the computational burden (time and storage) grows significantly with the system dimension and the grid's granularity. Another approach is to find the best approximation within a space {defined} by a certain set of basis functions, such as polynomials \cite{heydari2016optimal}. In this way, the optimality constraint, or the Bellman equality, can be imposed directly on the function coefficients. However, as the constraint needs to be valid continuously for all the states, one faces the challenge of infinitely many constraints in such a formulation. To relax this problem, the Bellman equality constraint is replaced with an inequality, resulting in an under-approximation of the function, and the conservativeness can be reduced by iteratively enforcing the inequality \cite{wang2015approximate,stellato2016high}.
For linear dynamics, sufficient conditions for such inequalities can be formulated as linear matrix inequalities (LMIs) 
when seeking an approximation in the class of quadratic basis functions; these LMIs can be efficiently solved via convex optimization.


Motivated by the setup in{\cite{wang2015approximate,stellato2016high}}, in this work, we:
\begin{itemize}
    \item propose a quadratic parameterization of the approximate value function for minimizing the motor's current ripple;
    \item derive LMI-based conditions that enforce the Bellman inequality for all values of the motor rotational angle;
    \item develop a verification procedure for the practical stability of the closed-loop system.
\end{itemize}
Based on this formulation, we define a computationally efficient switching law by minimizing the one-step predicted value of the value function approximation.  Simulation results show that offline function generation is significantly faster than the conventional value-iteration technique. Moreover, the closed-loop performance is better than that of the indirect method and is comparable to that of FCS-MPC, with, again, a considerably lower computational burden.
The remainder of the paper is structured as follows. Section \ref{sec:prelim} presents the PMSM model and the optimal control setting.
The reformulation of the value function approximation problem is given in Section \ref{sec:iter}. Simulation results and discussion are provided in Section \ref{sec:Sim}. Finally, Section \ref{sec:concl}
presents conclusions and future directions.








\textit{Notation:}
The set of natural numbers is noted as $\mathbb N$. Let $\mathbb N_{[a,b]} =[a,b] \cap \mathbb N$.
For a signal $x$, its value at the discrete time $t=kT_s$ is denoted as $x(k)$, where $T_s$ is the sampling time. Let $\bar x(s|k)$ denote the predicted value of $x$ at $t=(k+s)T_s$ upon the feedback at time $t=kT_s$. The notation $Q\succeq0$ signifies that $Q$ is positive semi-definite. The set of $n\times n$ real symmetric positive definite matrices is noted by $\mathbb S_{++}^n.$

\vspace{-0.25cm}
\section{Preliminaries}
\label{sec:prelim}
\subsection{PMSM model with a finite control set}
Consider the model of a three-phase PMSM driven by a multi-level voltage-source inverter (VSI) \cite{reed2016simultaneous}:
\be 
\begin{aligned}
   & {\mathrm d}i_{dq}^r/{\mathrm d t} =
{A}
i_{dq}^r + 
{B}
u_{dq}^r + E, \ E = [0 \ \ \Lambda_{PM}]^\top,
\\
&A =\bbm -R/L_d \ & \omega_{\text{re}}L_q/L_d \\
- \omega_{\text{re}}L_d/L_q \ & -R/L_q 
\ebm , B =\bbm L_d^{-1}\ & 0 \\
0\ & L_q^{-1}\ebm, 
\end{aligned} \label{eq:dqrmodel_CT}
\ee
with $i_{dq}^{r} = [ i_d^{r} \ i_q^{r} ]^\top$
and $u_{dq}^{r} = [ u_d^{r} \ u_q^{r} ]^\top$
collecting the direct- and quadrature-axis currents (respectively voltages) expressed in the rotor reference frame. 
The stator resistance, permanent-magnet flux linkage, and electrical rotor speed are denoted by $R$, $\Lambda_{PM}$,  and $\omre$, respectively. 
The stator self-inductances associated with the $d$- and $q$-axes are $L_d$ and $L_q$, respectively.

\begin{figure}[thb]
    \centering
    \includegraphics[width=0.915\linewidth]{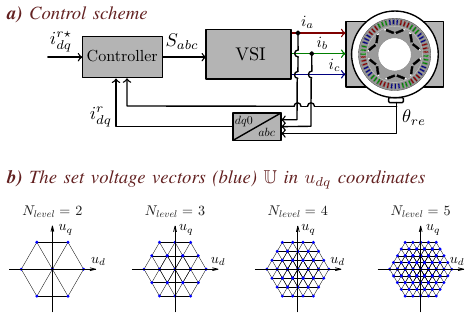}
    \caption{PMSM schematic and the finite control set in  \eqref{eq:fcs_alpha_beta}.}
    \label{fig:Scheme_directControl}
\end{figure}

Despite the deceptively simple linear structure, the dynamics \eqref{eq:dqrmodel_CT} pose a significant control challenge when powered by a VSI. Specifically, let us define the relation between the control input $u_{dq}^r$ and the VSI's discrete output as follows.
Let:
\be 
S_{abc} \in \mathbb N_{[0,N_{level}-1]}^3, \label{eq:swtState_abc}
\ee 
be the switch states of the inverter taking values from the finite set $ \mathbb N_{[0,N_{level}-1]}^3$ with $N_{level}$ representing the number of output voltage levels per phase. 
The inverter's output voltage in the stationary frame $u_{dq} = [u_d \ u_q]^\top$ is then computed as:
\be 
u_{dq} = {V_{bus}}({N_{level}-1})^{-1} T_{C}S_{abc}\triangleq  K(N_{level} )S_{abc}, \label{eq:clrkS}
\ee 
with $T_{C} =\frac 1 3 \bbms 2 &\ -1 &\ \  -1 \\
0 &\  \sqrt 3 &\  -\sqrt 3\ebms $ from the Clarke transform
and $V_{bus}$ being the bus voltage. The relation of {the inverter output voltage and its representation in the rotating frame $u_{dq}^r$ is:}
\be
u_{dq}^r = \bbm \phantom{-}\cos\ttre & \sin\ttre \\ -\sin \ttre & \cos \ttre \ebm u_{dq}   \triangleq {\mathsf R}(\ttre) u_{dq} \label{eq:clrk},
\ee 
where $\ttre$ is the electrical angle with $\dot\theta_{\mathrm{re}}=\omre$.
Consequently, the overall system is nonlinear and {switching} due to the transformation in \eqref{eq:clrk} and the finite control set in \eqref{eq:swtState_abc}, respectively.

To assess practical performance, we consider a sampled-data implementation.  In detail, given a sampling time $T_s$ (sec) with a zero-order hold actuation, the dynamics can be described as:
\begin{align}
   & i_{dq}^r(k+1) = A_di_{dq}^r(k) + B_d{\mathsf R}(\ttre(k))u_{dq}(k) + E_d ,  \nonumber\\
    &\ttre(k+1)  = \ttre(k) + \omega_{re}T_s, \label{eq:motor_discrete}
\end{align}
%
%
where assuming that Euler discretization is used: $A_d = T_sA + I$, $  B_d = T_s B, \ E_d = T_sE,$ and $u_{dq}$ belongs to the set:
\be 
\mathbb U =\{K(N_{level} ) S_{abc} \text{ as in \eqref{eq:clrkS}}: S_{abc} \in \mathbb N_{[0,N_{level}-1]}^3\}. \label{eq:fcs_alpha_beta}
\ee 
 In this setting, the inverter is limited to at most one switching event per sampling interval of length $T_s$. For later use, we define $
\mathbb U = \{u_1,u_2,...,u_{\mathrm p}\},
$ with $\mathrm p$ being the number of voltage vectors in $\mathbb U$ $\big($See Fig.~\ref{fig:Scheme_directControl}.b) for an illustration$\big)$.


\subsection{Optimal control formulation}
To optimize the current ripple during steady-state operation, we consider the minimization of the tracking error:
\be 
\xi(k) = i_{dq}^r(k) - i_{dq}^{r\star},
\ee 
with $i_{dq}^{r\star}$ being the desired current.
For this application (and for many switched systems in general), asymptotic convergence cannot be guaranteed, or $ \xi$ cannot be driven to 0 with limited switching {frequency}. 
Thus, we use the value function:
\be 
V^*(\xi(0),\ttre(0)) =\min _{{u}(k) \in \mathbb{U}}\textstyle\sum_{k=0}^{\infty} \gamma^k \|\xi(k)\|_2^2, \label{eq:org_infty_cost}
\ee 
which is computed using the error dynamics:
\be 
\begin{aligned}
    \pl \xi  & = A_d\ks \xi + B_d{\mathsf R}(\ks\ttre)\ks{u} + \bar x_e \\
    &\triangleq f(\ks\xi,\ks\ttre,\ks{u}).
\end{aligned}
\label{eq:error_dyna}
\ee 
with $\bar x_e = (A_d - I)i_{dq}^{r\star} + E_d $, and $\gamma \in(0,1)$ being the discount factor. For compactness, hereafter, the subscript $dq$ will be dropped from the control input.

With an infinite horizon, deriving $V^*$ in \eqref{eq:org_infty_cost} is often analytically intractable.
Thus, in the subsequent section, an approximate solution will be derived via the setting in \cite{wang2015approximate}.


\section{Value function approximation via iterated Bellman inequalities}
\label{sec:iter}

\subsection{Iterated Bellman inequalities}

{Under reasonable assumptions, the value function} $ V^*$ is a solution of the Bellman equation (see, e.g., \cite{bertsekas2012dynamic}) $\forall\xi,\ttre$:
\be 
    V^*(\xi,\ttre) = \min_{u\in\mathbb U}\ \{ \ell(\xi) + \gamma  V^*(f(\xi,\ttre,u),\ttre+\omega_{\mathrm{re}}T_s) \},
\label{eq:bellmanEq}
\ee 
with $ \ell(\xi) = \|\xi\|^2_2$. 
Existing approaches to solving \eqref{eq:bellmanEq} usually suffer from the curse of dimensionality.
To sidestep the problem, the condition \eqref{eq:bellmanEq} can be relaxed to finding $\hat V$ \cite{wang2015approximate}:
\be 
\hat V \leq \mathcal T \hat V, \label{eq:bellmanIneq_relaxed}
\ee 
where the operator $\mathcal T$ {is defined as}:
$$
(\mathcal T\hat V)(\xi) =\min_{u\in\mathbb U}\ \{ \ell(\xi) + \gamma \hat V(f(\xi,\ttre,u),\ttre+\omega_{\mathrm{re}}T_s) \},
$$
and $\hat V$ presents an underestimation of $V^*$ \cite{wang2015approximate}. For tractability, one can parameterize the $\hat V$ over a set of basis functions. However, such a parameterization will limit the solution to the subspace spanned by the basis functions. To reduce conservativeness, the Bellman inequality \eqref{eq:bellmanIneq_relaxed} is iterated as:
\be 
\hat V \leq \mathcal T^M\hat V \label{eq:bellmanIterationM}
\ee 
where $M>1$ is the number of iterations. This iteration is equivalent to searching for $\hat V_j$ satisfying \cite{wang2015approximate}:
\be 
\hat{V}_{j-1} \leq \mathcal T \hat V_j,\ j \in\mathbb N_{[1,M]}, \ \hat V_0 = \hat V_M ,\label{eq:Bellman_seq_compact} 
\ee 
and the approximate function will be $\hat V = \hat V_M.$
Next, we show that the conditions in \eqref{eq:Bellman_seq_compact} can be cast into a system of LMIs.
\subsection{Approximate dynamic programming via LMIs}
First, we restrict $\hat V_j$ to a function class parameterized by:
\be 
\hat V_j(\xi) = \xi^\top P_j\xi + 2q_j^\top \xi + r_j, \label{eq:basisQuad}
\ee 
with $P_j\in\mathbb S^2_{++}, q_j\in\R^2, r_j\in\R.$
Note that \eqref{eq:basisQuad} is an angle-independent parameterization. This is motivated by the fact that $\ttre$ is an uncontrolled variable; the effectiveness of this design choice will be shown in our results.
For compactness, let us denote the set of the parameters of $\hat V_j$ as $\Theta_j =\{P_j,q_j,r_j\}  $. Then, the condition  \eqref{eq:Bellman_seq_compact} 
requires $\forall  u\in\mathbb U$:
\be 
\hat V_{j-1}(\xi) \leq \ell(\xi) + \gamma \hat V_j(f(\xi,\ttre,u) ), \forall \xi, \ttre, j \in \mathbb N_{[1,M]} . \label{eq:Bellman_inter_constr}
\ee 

With such a reformulation, a sufficient condition for \eqref{eq:Bellman_inter_constr}  with a fixed $u\in\mathbb U$ is given in the following proposition.


\begin{prop}
\label{prop:fcs_theta_freeeee}
Given $u\in\mathbb U$, suppose  $\exists \tau\geq 0 $ such that
\be
G(\Theta_j,\Theta_{j-1}) \succeq \tau F(u),
\label{eq:compactLMI_perui}
\ee 
\end{prop}
with $G(\Theta_j,\Theta_{j-1})$ and $F(u)$ defined as
\be 
G(\Theta_j,\Theta_{j-1}) = \bbm Q_y & q_y\\ q_y^\top & r_y \ebm, F(u) = \bbms
0 & 0 & 0 \\
0 & -S & 0 \\
0 & 0 & \|u\|_2^2
\ebms ,
\ee 
{where} $C = [A_d\ I]$, $S = (B_d^\top B_d)^ {-1}$ and
\be 
\begin{aligned}
    Q_y & = \gamma C^{\top} P_j C + \bbms 
I-P_{j-1} & 0 \\
0 & 0
\ebms,\\
q_y&=\gamma (\bar{x}_e^{\top} P_j C + q_j^\top C)^\top - \bbm
q_{j-1} & 0 \\
\ebm^\top ,\\
r_y &= \gamma (\bar{x}_e^{\top} P_j \bar{x}_e+ 2 q_j^{\top} \bar{x}_e  + r_j) - r_{j-1}.
\end{aligned} \label{eq:someMatricesfortheLMI}
\ee 
Then, the { inequalities} \eqref{eq:Bellman_inter_constr} hold for the given $u\in\mathbb U,${$\ \forall \xi, \ttre $}.


\begin{proof}
First, for any $u\in \mathbb U$ and $\ttre\in[0,2\pi)$, define:
\be 
w = B_d {\mathsf R}(\ttre) u,\ z = A_d\xi + w +\bar x_e. \label{eq:weirdNotations}
\ee 
Then the condition \eqref{eq:Bellman_inter_constr} reads as:
\be 
\hat V_{j-1}(\xi) \leq \ell(\xi) + \gamma \hat V_j(z). \label{eq:compactBellmanIneq}
\ee 

Since the matrix $ {\mathsf R}(\ttre) $ is orthonormal in $\R^2$, one can state:
\be 
w^\top S w =w^\top (B_d^\top B_d)^ {-1} w \leq \|{\mathsf R}(\ttre) u\|_2^2 = \| u\|_2^2 .
\ee 
Therefore, for the fixed $u$, it suffices to ensure \eqref{eq:compactBellmanIneq} for all $\xi$
and all $w$ satisfying $w^\top S w \le \| u\|_2^2$, i.e.,
\be 
- w^\top S w  + \| u\|_2^2 \geq 0 \Rightarrow \mathcal Q(\xi,w) \geq 0 \label{eq:quadImplic}
\ee 
with the quadratic function $\mathcal Q(\xi,w)$ being:
\be \begin{aligned}
\mathcal Q(\xi,w) = \|\xi \|_2^2 + &\gamma (z^\top P_jz + 2q_j^\top z + r_j)\\
& - \xi^\top P_{j-1}\xi - 2q_{j-1}^\top \xi - r_{j-1}.
\end{aligned}
\ee 
Consequently, by re-arranging and with $z$ from \eqref{eq:weirdNotations}, the implication \eqref{eq:quadImplic} can be written as:
\be 
\psi^\top F(u) \psi \geq 0 \Rightarrow \psi^\top G(\Theta_j,\Theta_{j-1}) \psi \geq 0, \label{eq:implicationQuad}
\ee
with $\psi = [\xi^\top, w^\top, 1]^\top$.
Then, by the S-procedure \cite{boyd2004convex}, it suffices to find $\tau\geq 0 $ such that:
$
 G(\Theta_j,\Theta_{j-1}) \succeq \tau F(u).
$  
\end{proof}

With Proposition \ref{prop:fcs_theta_freeeee}, a solution of \eqref{eq:Bellman_seq_compact} can be computed via the following convex optimization problem:
\be 
\begin{aligned}
    \underset{\tau_i,\Theta_0,\Theta_j}{\max} {\mathfrak c}(\Theta_M)    
  \text{ s.t.}  \begin{cases}
 G(\Theta_j,\Theta_{j-1}) \succeq \tau_i F(u_i), \bbms P_0 & q_0 \\ q^\top_0 & r_0\ebms \succeq 0 ,&
  \\
      \Theta_0 = \Theta_M,\tau_i\geq 0, 
        i\in\mathbb N_{[1,\mathrm p]}, j\in\mathbb N_{[1,M]} & 
  \end{cases} 
\end{aligned}
\label{eq:sdp_full}
\ee 
with $\tau_i$ indexed only by $i$ and
$\mathfrak c(\Theta_M) $ being the user-defined objective function. 
For instance, one may wish to maximize the spatial average of $\hat V$ (or $\hat V_M$) over some operating region of the motor's currents, such as, over 
$ 
\mathcal B_\varepsilon =\{\xi:\|\xi\|_2\leq \varepsilon\},
$ {in which case} the objective function admits a closed-form expression:
\be \textstyle
\mathfrak c(\Theta_M)  =\frac{1}{\pi\varepsilon^2} \int_{\xi\in\mathcal B_\varepsilon} \hat V_M(\xi) \mathrm d \xi = \varepsilon^2{\text{Trace}(P_M)}/{4} + r_M. \label{eq:cost_volume}
\ee 

With such an approximation procedure, it should be noted that the setting involves a single hyperparameter $ M$. Ideally, $ M$ is chosen sufficiently large so that $\mathcal T^M\hat V$ approaches $V^*$ due to the contraction property $ \mathcal T$ (see \cite{bertsekas2012dynamic}). Thus, increasing $M$ reduces the conservativeness of the underapproximation, yet it linearly increases the computational cost of \eqref{eq:sdp_full}. Another factor related to the approximation quality is the S-procedure used to enforce \eqref{eq:implicationQuad}, as the LMI \eqref{eq:compactLMI_perui} is a sufficient condition for such an implication. This introduces additional conservativeness by excluding feasible solutions of $\Theta_j,\Theta_{j-1}$, a trade-off to make the problem convex, and hence tractable.

\subsection{Closed-loop analysis}\label{sec:closedloop}
We now present a controller and analyze the corresponding closed-loop properties. For brevity, hereinafter, we drop the subscript $M$ in the elements of $\Theta_M$ and refer to $\hat V = \hat V_M $ as
\be \hat V(\xi)= \xi^\top P \xi + 2q^\top \xi +r. \label{eq:reducedHatV}\ee 
Then, we use the horizon-one MPC defined as
\be
u=\mathbf k(\xi,\ttre) \triangleq {\arg\min}_{u\in\mathbb U}\; \ell(\xi) + \gamma\hat V(f(\xi,\ttre,u)). \label{eq:oneStepMPC}
\ee 
We refer to this controller as LMI-ADP and proceed with a verification for positive invariance.

\begin{lemma} \label{lem:PWQ}
    The controller \eqref{eq:oneStepMPC} can be written as:
\be 
\mathbf k(\xi,\ttre) = u_i, \text{ if } \xi \in \mathcal R_i(\ttre) ,\label{eq:Control_pwc}
\ee 
where $\mathcal R_i(\ttre) =\{\xi: a_{ij}^\top(\ttre) \xi \leq -b_{ij}(\ttre), \forall j\in\mathbb N_{[1,\mathrm p]}\}$ and with $h_i(\ttre) = B_d\mathsf R(\ttre)u_i+\bar x_e$, the halfspaces are:
\begin{align}
  &  a_{ij}(\ttre)= 2A_d^\top P \Tilde{h}_{ij}(\ttre), \Tilde{h}_{ij}(\ttre)=h_i(\ttre)-h_j(\ttre), \nonumber \\
  &  b_{ij}(\ttre) = \|h_{i}(\ttre)\|_P^2 - \|h_{j}(\ttre)\|_P^2 + {2}q^\top \Tilde{h}_{ij}(\ttre). \label{eq:hRep_ttre}
\end{align}
Moreover, with $\hat V^+(\xi,\ttre)\triangleq\hat V(f(\xi,\ttre,\mathbf{k}(\xi,\ttre)))$, one has:
\be \hat
 V^+(\xi,\ttre)= \hat V^+_i(\xi,\ttre), \text{ if } \xi \in \mathcal R_i(\ttre),\label{eq:Vplus}
\ee 
and $\hat 
V^+_i(\xi,\ttre) = \xi^\top \breve P \xi + {2}\breve q_i^\top(\ttre) \xi + \breve r_i(\ttre)
$ is defined by
$$ 
\begin{aligned}
&\breve P  = A_d^\top P A_d, \breve q_i(\ttre) =  A_d^\top P h_i(\ttre) + A_dq, \\
&  \breve r_i(\ttre)=  h_i(\ttre)^\top P h_i(\ttre) + {2}q^\top h_i(\ttre) + r .
\end{aligned}
$$ 
\end{lemma}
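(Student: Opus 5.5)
The plan is to expand the one-step predicted cost for each candidate input and compare the candidates pairwise. For a fixed $\ttre$ and $u_i\in\mathbb U$, the error dynamics \eqref{eq:error_dyna} give $f(\xi,\ttre,u_i) = A_d\xi + B_d\mathsf R(\ttre)u_i + \bar x_e = A_d\xi + h_i(\ttre)$, which is affine in $\xi$ with a $\xi$-independent offset $h_i(\ttre)$. The objective in \eqref{eq:oneStepMPC} is $J_i(\xi,\ttre)\triangleq \ell(\xi)+\gamma\hat V(A_d\xi+h_i(\ttre))$. Because $\ell(\xi)$ does not depend on $u$ and $\gamma>0$, minimizing $J_i$ over $i$ is the same as minimizing $\hat V(A_d\xi+h_i(\ttre))$ over $i\in\mathbb N_{[1,\mathrm p]}$.

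The key step is the pairwise difference. Substituting \eqref{eq:reducedHatV} and expanding, the quadratic term $\xi^\top A_d^\top P A_d\xi$ and the constant $r$ are common to all $i$ and cancel. This leaves
\begin{align*}
\hat V(A_d\xi+h_i)-\hat V(A_d\xi+h_j) ={}& 2\xi^\top A_d^\top P(h_i-h_j) + \|h_i\|_P^2-\|h_j\|_P^2\\
&+ 2q^\top(h_i-h_j),
\end{align*}
where we use the symmetry of $P$ to combine the cross terms. This is exactly $a_{ij}^\top(\ttre)\xi + b_{ij}(\ttre)$ with $a_{ij},b_{ij}$ as in \eqref{eq:hRep_ttre}. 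Hence $u_i$ attains the minimum in \eqref{eq:oneStepMPC} if and only if $a_{ij}^\top(\ttre)\xi\le -b_{ij}(\ttre)$ for all $j$, i.e.\ $\xi\in\mathcal R_i(\ttre)$. The constraint with $j=i$ is trivial, since $a_{ii}=0$ and $b_{ii}=0$. Since $\mathbb U$ is finite, a minimizer always exists, so $\bigcup_i\mathcal R_i(\ttre)=\R^2$ and \eqref{eq:Control_pwc} covers every $\xi$.

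The main subtlety is well-posedness on the boundaries. Where $\mathcal R_i(\ttre)\cap\mathcal R_j(\ttre)\neq\emptyset$ (ties, or coincident voltage vectors giving $h_i=h_j$), the $\arg\min$ is set-valued. I would state explicitly that a fixed tie-breaking rule is assumed, for instance the smallest index. I would then check that the representation remains valid in the sense that whichever $i$ is selected satisfies $\xi\in\mathcal R_i(\ttre)$. Equivalently, the regions can be made disjoint by turning the inequalities into strict ones for $j<i$. I would also observe that, on a tie, $\hat V^+_i=\hat V^+_j$ at that $\xi$, so \eqref{eq:Vplus} does not depend on the tie-breaking.

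Finally, for \eqref{eq:Vplus}, on $\mathcal R_i(\ttre)$ we have $\hat V^+(\xi,\ttre)=\hat V(A_d\xi+h_i(\ttre))$. Expanding this quadratic gives
\begin{align*}
\hat V^+(\xi,\ttre) ={}& \xi^\top A_d^\top PA_d\xi + 2\big(A_d^\top P h_i + A_d^\top q\big)^\top\xi\\
&+ h_i^\top P h_i + 2q^\top h_i + r,
\end{align*}
which identifies $\breve P$, $\breve q_i$ and $\breve r_i$. The linear coefficient arises as $2q^\top A_d\xi = 2(A_d^\top q)^\top\xi$, so the $A_d q$ in the statement should read $A_d^\top q$; the two coincide only when $A_d$ is symmetric, which does not hold here for $\omre\neq0$.
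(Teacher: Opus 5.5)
Your proposal is correct and follows the same route as the paper. The paper's proof only asserts that $\hat V(f(\xi,\ttre,u_i))\le \hat V(f(\xi,\ttre,u_j))$ for all $j$ is equivalent to $\xi\in\mathcal R_i(\ttre)$, and then obtains $\hat V^+$ by substitution; you supply the explicit expansion and the tie-breaking remark it leaves implicit, and your correction $\breve q_i(\ttre)=A_d^\top P h_i(\ttre)+A_d^\top q$ (rather than $A_d q$) is right, since $A_d$ is not symmetric when $\omre\neq 0$.
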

\begin{proof}
    Given $\ttre$, one can show $\forall j\in\mathbb N_{[1, \mathrm p]}:$ $$\hat V(f(\xi,\ttre,u_i)) \leq \hat V(f(\xi,\ttre,u_j))  \Leftrightarrow \xi\in\mathcal R_i(\ttre).$$
    Thus, \eqref{eq:Control_pwc} holds, which leads to \eqref{eq:Vplus} by substitution.
\end{proof}

For practical\footnote{We seek practical stability because for this class of discrete-time switched systems, one cannot achieve asymptotic stability \cite{deaecto2016practical}.} stability, we wish to certify that, 
for a given constant $\delta >0$ and $\Omega_\delta \triangleq \{\xi:\hat V(\xi)\leq \delta\}$, one has:
\be \hat
V^+(\xi,\ttre) \leq \delta, \ \forall\xi \in \Omega_\delta, \forall \ttre\in[0,2\pi).  \label{eq:invariant_cond}
\ee 
In other words, $\Omega_\delta$ is a positively invariant set, and the motor current will be bounded inside such a region. The following proposition formalizes such a verification setup.
\begin{prop} \label{prop:stab_veri}
   Given $\delta>0, $ let
   \be 
   \sigma_\delta(\ttre) = \max_{i\in\mathbb N_{[1,\mathrm p]}} \sigma_{\delta,i}(\ttre),\label{eq:sigma_def}
   \ee 
with $\sigma_{\delta,i}(\ttre) = \max \hat
V^+_i(\xi,\ttre) \text{ s.t } \xi\in\mathcal R_i(\ttre)\cap \Omega_\delta$ and $\max_{\ttre}\sigma_\delta(\ttre) \leq \delta$. Then, the set $\Omega_\delta$ is positively invariant.

\end{prop}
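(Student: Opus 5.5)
The plan is to verify condition \eqref{eq:invariant_cond} directly, using the piecewise description of the closed loop from Lemma~\ref{lem:PWQ}. Fix any $\ttre\in[0,2\pi)$ and any $\xi\in\Omega_\delta$. We need to show $\hat V^+(\xi,\ttre)\le\delta$. Once this holds, the set invariance follows by induction on $k$. If $\xi(k)\in\Omega_\delta$, then $\xi(k+1)=f(\xi(k),\ttre(k),\mathbf k(\xi(k),\ttre(k)))$ and $\hat V(\xi(k+1))=\hat V^+(\xi(k),\ttre(k))\le\delta$, so $\xi(k+1)\in\Omega_\delta$. This uses the fact that $\ttre(k)$ can always be taken modulo $2\pi$, because $\mathsf R(\cdot)$ is $2\pi$-periodic.

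The key step is to place $\xi$ in one of the regions $\mathcal R_i(\ttre)$. First we check that these regions cover $\R^2$. The set $\mathbb U$ is finite, so the minimum in \eqref{eq:oneStepMPC} is attained by some index $i$. The computation in the proof of Lemma~\ref{lem:PWQ} shows that $\hat V(f(\xi,\ttre,u_i))\le\hat V(f(\xi,\ttre,u_j))$ for all $j$ is equivalent to $\xi\in\mathcal R_i(\ttre)$. Hence every $\xi$ lies in some $\mathcal R_i(\ttre)$, with ties broken by the argmin rule.

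Now take the index $i=\mathbf k$-index of $\xi$. By \eqref{eq:Vplus}, $\hat V^+(\xi,\ttre)=\hat V^+_i(\xi,\ttre)$. Since $\xi\in\mathcal R_i(\ttre)\cap\Omega_\delta$, $\xi$ is feasible for the maximization that defines $\sigma_{\delta,i}(\ttre)$. This gives the chain
\be
\hat V^+(\xi,\ttre)=\hat V^+_i(\xi,\ttre)\le\sigma_{\delta,i}(\ttre)\le\sigma_\delta(\ttre)\le\max_{\ttre'}\sigma_\delta(\ttre')\le\delta .
\ee
If $\mathcal R_i(\ttre)\cap\Omega_\delta$ is empty for some $i$, we set $\sigma_{\delta,i}=-\infty$, and that index does not affect the max.

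The main subtlety is well-posedness of these maxima. The function $\hat V^+_i$ is continuous. The set $\Omega_\delta$ is compact because $P\succ0$, and $\mathcal R_i(\ttre)$ is closed. Hence each $\sigma_{\delta,i}(\ttre)$ is attained whenever its feasible set is nonempty. For $\max_{\ttre}\sigma_\delta$, it suffices to read the hypothesis as a supremum over $\ttre\in[0,2\pi)$; the chain above only uses that bound. A second point to treat carefully is ties on region boundaries, where $\xi$ lies in several $\mathcal R_i$. These cause no problem, because $\hat V^+_i$ coincides for all tied indices (they attain the same minimum value). So whichever index the controller selects, the bound still holds.
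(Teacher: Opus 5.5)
Your proposal is correct and follows the same route as the paper. The paper simply observes that \eqref{eq:Vplus} and \eqref{eq:sigma_def} give $\sigma_\delta(\ttre)=\max_{\xi\in\Omega_\delta}\hat V^+(\xi,\ttre)$, so that \eqref{eq:invariant_cond} holds; your chain of inequalities makes explicit the details it leaves implicit (the regions $\mathcal R_i(\ttre)$ covering $\R^2$, ties on region boundaries, attainment of the maxima, $2\pi$-periodicity in $\ttre$, and the induction from \eqref{eq:invariant_cond} to positive invariance).
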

\begin{proof}
From \eqref{eq:Vplus} and \eqref{eq:sigma_def}, 
$\sigma_\delta(\ttre) = \displaystyle\max_{\xi \in\Omega_\delta}V^+(\xi,\ttre).$
Thus $\max_{\ttre}\sigma_\delta(\ttre) \leq \delta$ leads to \eqref{eq:invariant_cond}.
\end{proof}
Regarding verification complexity, the outer maximization is performed over the continuous one-dimensional variable $\ttre$, while each evaluation of $\sigma_{\delta,i}(\ttre)$ reduces to a smooth two-dimensional optimization problem over a convex set.

With such a set, the system's steady-state performance or the infinite-horizon average cost can be characterized as follows.

\begin{coro}\label{coro:AvgCost}
   Let $\Omega_\delta$ be an invariant set certified from Proposition \ref{prop:stab_veri}, and $  \nu \triangleq \max_{\ttre\in[0,2\pi]} \max_{\xi\in\Omega_\delta} \Gamma(\xi,\ttre)$ then
$$\textstyle
\bar J\triangleq\limsup_{N\to\infty} \frac{1}{N}\textstyle\sum_{k=0}^{N-1}\ell(\xi(k))\leq\nu, 
$$
{with }  $\Gamma(\xi,\ttre)\triangleq  \hat V^+(\xi,\ttre) - \hat V(\xi) + \ell(\xi(k))$.\end{coro}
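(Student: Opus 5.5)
The plan is a telescoping argument along the closed-loop trajectory, using the invariance certified by Proposition \ref{prop:stab_veri}. Fix an initial condition with $\xi(0)\in\Omega_\delta$ and any $\ttre(0)$, and apply the LMI-ADP law \eqref{eq:oneStepMPC}. Then $\xi(k+1)=f(\xi(k),\ttre(k),\mathbf k(\xi(k),\ttre(k)))$ and $\ttre(k+1)=\ttre(k)+\omre T_s$. Since $\Omega_\delta$ is positively invariant, induction gives $\xi(k)\in\Omega_\delta$ for all $k\ge 0$. The angle can be reduced modulo $2\pi$, because $\mathsf R(\ttre)$ is $2\pi$-periodic. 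Hence each pair $(\xi(k),\ttre(k))$ lies in the set over which $\nu$ is defined, and
\begin{equation*}
\Gamma(\xi(k),\ttre(k))\le \nu \quad \text{for all } k .
\end{equation*}

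Next, by definition of $\hat V^+$ we have $\hat V^+(\xi(k),\ttre(k))=\hat V(\xi(k+1))$. Therefore
\begin{equation*}
\ell(\xi(k)) = \Gamma(\xi(k),\ttre(k)) - \hat V(\xi(k+1)) + \hat V(\xi(k)) \le \nu + \hat V(\xi(k)) - \hat V(\xi(k+1)).
\end{equation*}
Summing over $k=0,\dots,N-1$, the $\hat V$ terms telescope:
\begin{equation*}
\sum_{k=0}^{N-1}\ell(\xi(k)) \le N\nu + \hat V(\xi(0)) - \hat V(\xi(N)).
\end{equation*}

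To conclude, I divide by $N$ and take the $\limsup$. This requires $\hat V(\xi(0))-\hat V(\xi(N))$ to be bounded uniformly in $N$. The upper bound holds because every iterate lies in $\Omega_\delta$, so $\hat V(\xi(N))\le\delta$ and the bracket is at least $\hat V(\xi(0))-\delta$. For the other direction, $\hat V$ is a quadratic with $P\succ 0$, so it is bounded below on $\R^2$ by its minimum value $r-q^\top P^{-1}q$. The difference is therefore at most $\hat V(\xi(0)) - (r-q^\top P^{-1}q)$, a finite constant. Its contribution, divided by $N$, vanishes in the limit, which yields $\bar J\le\nu$.

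The only delicate points are bookkeeping rather than obstacles. First, the statement writes $\ell(\xi(k))$ inside $\Gamma$; I read it as $\ell(\xi)$. Second, $\nu$ must be finite. This holds because $\Omega_\delta$ is compact (again since $P\succ0$), and $\hat V^+$ is piecewise quadratic in $\xi$, continuous in $\ttre$, and is a minimum over finitely many such functions, so it is bounded on $\Omega_\delta\times[0,2\pi]$.
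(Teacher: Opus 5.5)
Your proposal is correct and follows the same route as the paper. Invariance keeps $\xi(k)\in\Omega_\delta$, so that $\hat V(\xi(k+1))-\hat V(\xi(k))\le-\ell(\xi(k))+\nu$ along the closed loop, and telescoping, dividing by $N$ and taking the $\limsup$ gives $\bar J\le\nu$. You also make explicit what the paper leaves implicit: that the iterates remain in $\Omega_\delta$, that $\nu$ is finite, and that $\hat V$ is bounded below, which is the only direction of your two-sided bound on $\hat V(\xi(0))-\hat V(\xi(N))$ that is actually needed.
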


\begin{proof}
 By construction, one has 
\be 
\hat V^+(\xi,\ttre) - \hat V(\xi) \leq -\ell(\xi(k)) + \nu,\ \forall \ttre,\forall \xi\in\Omega_\delta.
\label{eq:ISS_cond}
\ee 
Suppose $\xi(0)\in\Omega_\delta$, from \eqref{eq:ISS_cond}, by telescoping 
\begin{align}
&\textstyle\sum_{k=0}^{N-1} \left[  \hat V(\xi({k+1})) - \hat V(\xi(k)) \right]   \leq  \sum_{k=0}^{N-1} \left[ - \ell(\xi(k)) + \nu \right],\nonumber  \\
   &\textstyle\Leftrightarrow \frac{1 }{N }\sum_{k=0}^{N-1}\ell(\xi(k))  \leq \nu  - \frac{1 }{N }\Big[\hat V(\xi(N)) - \hat V(\xi(0)) \Big] \nonumber \\
    & \textstyle\Rightarrow\limsup _{N \to \infty} \frac{1}{N} \sum_{k=0}^{N-1}\ell(\xi(k))  \leq \nu .\label{eq:Avg_cost_1}
\end{align}
%
\end{proof}
The metric $\bar J$ is relevant because it represents an upper bound on the current ripple in steady state operation.
\section{Simulation results}
\label{sec:Sim}
{In this section, we report and discuss the simulation
results.}



    
    

\subsection{Numerical setup}\label{subsec:numSet}
For simulations, the model's parameters {of a representative PMSM intended for EV applications} are extracted from the finite-element analysis model using \texttt{Ansys} software and are given as: $L_d = 0.46, L_q = 1.40 $ (mH), $\Lambda_{PM} = 0.1539$ (Wb), and $ R = 0.032$ ($\Omega$). The electrical speed is computed as $\omre = {\pi\omega_cN_{pole}}/{60}$, with $N_{pole}=6$ being the number of magnetic poles, and the commanded motor speed is given as $\omega_c = 15000$ (RPM). The bus voltage $ V_{bus} = 800V$ and the desired current vector is $i_{dq}^{r\star} = [-194.59\ \ 52.53]^\top $ (A).
For analysis {of the inverter design choices}, the synthesis of $\hat V$ will be carried out for $N_{level} \in\mathbb N_{[2,5]}$.  The semi-definite program \eqref{eq:sdp_full} is solved with MOSEK via YALMIP interface. The number of iterations is set as $M=100, \gamma = 0.975$ with the cost function in \eqref{eq:cost_volume}.



For comparison,
with the indirect architecture (controller-modulator), we employ a proportional-integral (PI) controller to generate the reference voltage $u_{dq}^r$ with a bandwidth of $180\pi$~(rad/s), which is then synthesized via space vector modulation (SVM) for the two-level inverter~\cite{handley1990space}. This setup primarily captures the effect of the fixed switching frequency in SVM.
For the direct control scheme, we consider the FCS MPC (FCS-MPC) law $u(k)=\bar u^\star(0|k)$. The sequence $\bar u^\star$ is the minimizer in the following optimization problem: 
\be 
\begin{aligned}
    \bar u^\star &= \underset{\bar u\in\mathbb U}{\arg \min } \  \textstyle\sum_{s=0}^{N_p}\ell(\bar \xi(s|k))  \\
    \text{s.t } & \bar \xi(s+1|k) = f(\bar \xi(s|k),\bar{\theta}_{\mathrm{re}} (s|k), \bar u(s|k) ), \\
    &
    \bar{\theta}_{\mathrm{re}}(s+1|k) =  \bar{\theta}_{\mathrm{re}}(s|k) + \omre Ts ,\\
    & \bar \xi(0|k) = \ks{ i_{dq}^r} - i_{dq}^{r\star}, \bar{\theta}_{\mathrm{re}}(0|k) = \ks\ttre.
\end{aligned}
\label{eq:FCSMPC}
\ee 

To {illustrate} the trade-off between switching effort and tracking performance, we use the definition of total harmonic distortion (THD) {which is a standard metric used to assess current ripple} \cite{karamanakos2019guidelines}. To quantify the {switching} effort, the number of switching instants (referred to as $S_c$ or Switch Count) will be counted, as it is directly related to the energy switching loss of the power electronic components \cite{geyer2016model}. Particularly, for the case of a two-level inverter, we convert the switching effort into the notion of ``switching frequency" $f_{sw}$ within the SVM framework, where, in one switching period, each of the three phases experiences two switching events. Hence, $f_{sw} = S_c/(6t_{sim})$(Hz) where $t_{sim}$ (sec) is the simulation time. The sampling frequency is denoted as $f_s=1/T_s$ and will be a variable in the analysis.
Finally, in particular for the two-level inverter, we employ the value iteration method proposed in \cite{bertsekas2012dynamic} to obtain the ``ground-truth\footnote{{Here, “ground truth” denotes the value function obtained via value iteration on a gridded state space, serving as a numerical reference for comparison.}}" for $V^*$. The domain of interest $\mathcal E = \{\xi:|\xi |\leq 300 \text{(A)}\}$ is uniformly gridded with the resolution of $5$ (A) while $\ttre \in \left\{(k-1)\pi/15: k\in\mathbb N_{[1,30]}\right\}$.
All computations are performed in MATLAB 2025b on an Intel Core i5-10300H CPU @ 2.50GHz with 16GB RAM.

\subsection{Results and discussions} \label{sec:SimRes}
\begin{figure}[htbp]
    \centering
    \includegraphics[width=0.995\linewidth]{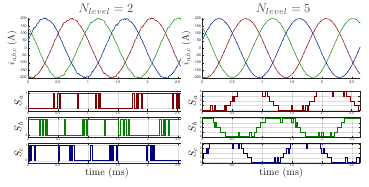}
    \caption{Horizon-one MPC with the approximate value function for different values of $N_{level}$ with $f_s = 50$kHz.}
    \label{fig:HorizonOne}
\end{figure}

The simulation results for inverters of different levels are provided in Fig. \ref{fig:HorizonOne} and Table \ref{tab:Simulation} where the sampling frequency of 50kHz has been assumed. 
As observed, the THD decreases as the number of inverter voltage levels increases. This behavior is consistent with the expected trade-off between hardware complexity and achievable performance. From another perspective, this effect can also be interpreted as a quantization phenomenon, where increasing the number of voltage levels effectively refines the discretization of the control set.


In terms of stability certification, we report the case $N_{level} = 2$ in Fig. \ref{fig:Stability_verifi} corresponding to the smallest value of $\delta=1122.75$, for which Proposition \ref{prop:stab_veri} remains satisfied. {The function $\sigma_\delta(\ttre)$ in \eqref{eq:sigma_def} is computed via \texttt{fmincon} method in MATLAB with multiple initial guesses. We also note that, as the problem therein is only two-dimensional, a global solution can also be characterized by enumerating all candidate solutions from the Karush–Kuhn–Tucker conditions.} We then examine forward invariance by evaluating the closed-loop vector field along the boundary of $\Omega_\delta$ for $\ttre\in[0,2\pi)$. It can be observed that all successor states lie inside the invariant set and the average cost is bounded by $\nu$ in \eqref{eq:Avg_cost_1}.
\begin{table}[hbt]
    \centering
    \caption{Results for different $N_{level}$ with \eqref{eq:oneStepMPC} \& $f_s=50kHz$.}\begin{tabular}{|c|c|c|c|c|} \hline
       $N_{level}$  & 2 & 3 & 4 & 5\\  \hline 
        THD & 2.93\% & 1.41\% & 0.95\%  & 0.76\%\\ \hline
        $S_c$ & 85 & 87 & 105 & 104\\\hline
        CT-offline (sec) & 4.4  & 9.5 & 17.2 & 29.9\\\hline
        mean CT-Online (ms) & 0.36 & 0.38 & 0.92 & 1.62\\\hline
    \end{tabular}
    
    \label{tab:Simulation}
\end{table}

Regarding the offline computational effort required to approximate the value function, we report that the value iteration approach requires approximately 55 minutes to converge for the simplest VSI configuration ($N_{level} = 2$). In contrast, the proposed LMI-based formulation completes in about 4 seconds for the two-level {inverter} case and around 30 seconds for the five-level inverter {case}. These results further highlight the computational advantage of the proposed convex formulation.
{This computational advantage becomes particularly relevant in {situations} where the value function must be recomputed repeatedly, such as in higher-level design or co-design procedures involving system parameters.}

\begin{figure}
    \centering
    \includegraphics[width = 0.995\linewidth]{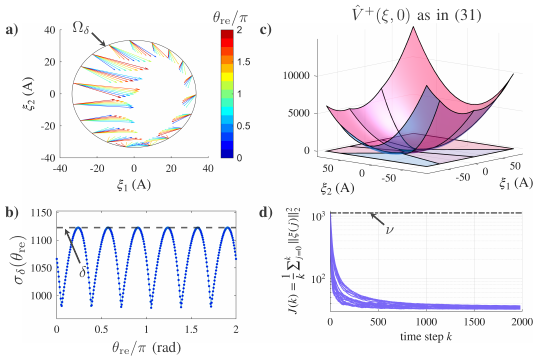}
    \caption{a) The vector field with different values of $\ttre$ on the boundary of $\Omega_\delta$; b) The function $\sigma_\delta(\ttre)$; c) The piecewise quadratic function $\hat V^+(\xi, \ttre)$ defined over $\mathcal R_i(\ttre)$ as in \eqref{eq:Vplus} with $\ttre=0$; d) Upper bound on the average current ripple.}
    \label{fig:Stability_verifi}
\end{figure}

\begin{figure}[tbp]
    \centering
    \includegraphics[width=0.95\linewidth]{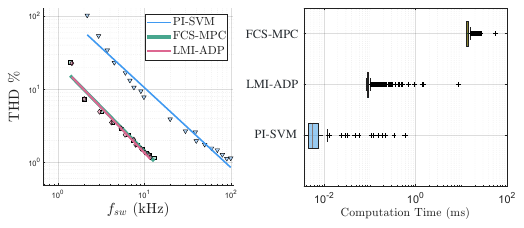}
    \caption{Top: Comparison of performance trade-off between the proposed setting, FCS-MPC ($N_{p} = 3$) and PI-SVM.}
    \label{fig:Frontiers_compare}
\end{figure}

Regarding the online performance of the controller \eqref{eq:oneStepMPC}, its computational burden reduces to evaluating a two-dimensional quadratic function over the finite set $\mathbb U$. The reported computation times are obtained from a MATLAB implementation, yielding  90$\mu s$ on average\footnote{{We note that the value 0.36 ms in Table \ref{tab:Simulation} corresponds to the benchmark case reported in therein, while the 
value in Fig. \ref{fig:Frontiers_compare} corresponds to the average time spent per control update over all tested sampling frequencies $f_s$.}} for LMI-ADP compared to $5.5\mu s$ for PI-SVM and $13400\mu s$ of FCS-MPC (see Fig. \ref{fig:Frontiers_compare}, top-right).
We note that, in embedded implementations, such a search over the finite set can typically be accelerated to the microsecond level \cite{stellato2016high}. 
Next, tracking efficiency results are reported in Fig. \ref{fig:Frontiers_compare}, where the proposed LMI-ADP scheme is compared with PI-SVM and FCS-MPC. First, the proposed approach yields a significantly more favorable trade-off between switching effort and THD than PI-SVM. This behavior is expected since SVM enforces a fixed switching pattern with six commutations per period (in the two-level case), whereas our variable-switching-frequency policy can maintain the same switching state when appropriate, thereby avoiding unnecessary switching events.
Second, this advantage is also reflected in the trade-off curve obtained with FCS-MPC. In the present example, the performance of the proposed LMI-ADP scheme is comparable to that of FCS-MPC while requiring significantly lower online computational effort. This again highlights the benefit of precomputing the value function and using it as a tail cost in the online OCP.

\begin{figure}[htbp]
    \centering
     \includegraphics[width = 0.95\linewidth]{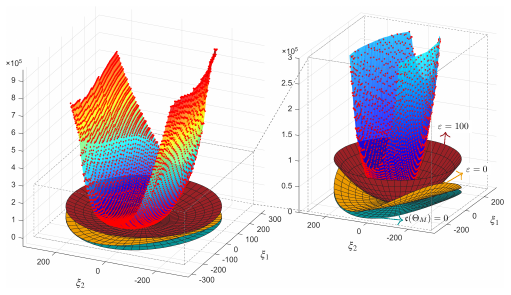}
    \caption{The approximation function $\hat V$ from different objective functions in \eqref{eq:sdp_full} and the interpolated function from value iteration (red points) with $\ttre=0$ {(The right figure shows a close-up of the boxed region)}.}
    \label{fig:ValueIter_compare}
\end{figure}

This observation naturally raises the question of the quality of the under-approximation of the value function. To visualize this, Fig. \ref{fig:ValueIter_compare} shows the function obtained from value iteration over a dense grid, plotted as red points. In the figure, we vary the choice of the cost function $\mathfrak c(\Theta_M)$ in \eqref{eq:sdp_full} to illustrate its effect on the approximation.
More specifically, when the cost is chosen as 
$\mathfrak c(\Theta_M)=0$ and only the LMI conditions are enforced as hard constraints, the resulting approximation is the most conservative, as no mechanism encourages the under-approximation to increase. Next, setting 
$\varepsilon=0$ promotes only the free scalar variable 
$r_M$, leading to a less conservative result. Finally, when a region of interest is introduced with 
$\varepsilon=100$ (A), the approximation becomes the closest to the value function obtained via value iteration.
However, as visible from the plot, the approximation remains far from tight. This limitation stems from the limited expressiveness of the quadratic basis functions in \eqref{eq:basisQuad}, which cannot capture more complex structures. This observation motivates future investigations into more flexible approximating structures, such as polynomials \cite{heydari2016optimal} or piecewise quadratic functions \cite{he2024approximate}.
{Note that, nevertheless, in terms of performance, our approach shows similar results to FCS-MPC (Fig. \ref{fig:Frontiers_compare}-left), which is expected to approximate DP for longer horizons yet at a lower computation cost (Fig. \ref{fig:Frontiers_compare}-right).}


{Finally, while the simulations demonstrate a desirable performance improvement with the proposed approach, there are several limitations of the current work. In particular, the method is developed based on a nominal PMSM model. In practice, parameter variations and uncertainties may affect performance. Robustness to such unmodeled factors, as well as experimental validation, is not covered in this work and remains a direction for future investigation.}





\section{Conclusion}
\label{sec:concl}
This paper presented an LMI-based approach for approximating the infinite-horizon value function in finite-control-set problems for three-phase PMSMs with rotational dynamics. The proposed formulation enables the value function to be computed orders of magnitude faster than conventional value iteration. 
Simulation results on multilevel inverters show that the resulting closed-loop performance achieves a favorable trade-off between THD and switching effort, comparable to FCS-MPC but with a significantly lower online computational burden. We have highlighted the benefit of precomputing the value function and using it as a tail cost in a horizon-one MPC setting. In addition to the experimental validation, future work will investigate richer classes of as well as their integration into the bilevel problem of system and control co-design. 

\end{document}